\documentclass[11pt]{amsart}
\usepackage{xcolor}
\usepackage{graphicx}
\newtheorem{thm}{Theorem}[section]

\newtheorem{lem}[thm]{Lemma}
\newtheorem{prop}[thm]{Proposition}
\theoremstyle{definition}
\newtheorem{defn}[thm]{Definition}
\theoremstyle{remark}
\newtheorem{rem}[thm]{Remark}
\numberwithin{equation}{section}

\usepackage{hyperref}
\hypersetup{
   colorlinks=true,
   linkcolor=blue,  
   urlcolor=cyan,
}

\def\ch{\mathcal{ H}}

\def\bn{{\mathbb N}}

\def\a{\alpha}
\def\b{\beta}
\def\g{\gamma}  
\def\d{\delta}

\def\l{\lambda}

\def\tr{\mathop{\rm Tr}}
\def\id{{\bf 1}\!\!{\rm I}}

\def\<{\langle}
\def\>{\rangle}
\def\a{\alpha}

\begin{document}

\title[$k$-Kadison-Schwarz mappings]{
Constructing $k$-Kadison-Schwarz maps}

\author{Farrukh Mukhamedov}
\address{Farrukh Mukhamedov\\
 Department of Mathematical Sciences\\
College of Science, The United Arab Emirates University\\
P.O. Box, 15551, Al Ain\\
Abu Dhabi, UAE} \email{{\tt far75m@gmail.com} {\tt
farrukh.m@uaeu.ac.ae}}

\author{Dariusz Chru\'sci\'nski}
\address{Dariusz Chru\'sci\'nski\\
Institute of Physics, Faculty of Physics, Astronomy and
Informatics, Nicolaus Copernicus University, Grudzi\c{a}dzka 5/7,
87--100 Toru\'n, Poland} \email{{\tt darch@fizyka.umk.pl}}

\begin{abstract}

We study $k$-Kadison–Schwarz ($k$-KS) mappings on matrix algebras and
derive explicit conditions ensuring the $k$-KS property
for two classes of maps parameterized by a single $k$-positive map $\Phi$. Our construction shows how to {\em upgrade} $k$-positivity to $k$-KS property.  We also introduce KS-decomposability and provide conditions under which positive maps admit a convex decomposition into KS and co-KS components. \\
 \vskip 0.3cm \noindent {\it
Mathematics Subject Classification}: 81P45, 46L05, 47L07\\
{\it Key words}: positive maps, Kadison–Schwarz operators,
$k$-positive maps, depolarizing channels, KS-decomposability.
\end{abstract}

\maketitle

\section{Introduction}

Positive linear maps between matrix algebras play a fundamental role in both operator algebras \cite{Paulsen,Stormer,Kadison,Bhatia,bengtsson-zyczkowski} and quantum information theory  \cite{QIT}. In the algebraic formulation of quantum mechanics, states are described by positive normalized functionals on $C^*$--algebras, and physical processes are represented by positive maps between these algebras. In the finite-dimensional setting, where one works with matrix algebras $B(H_d)$ over a $d$-dimensional Hilbert space $H_d$, positivity can be refined in several ways, most notably by the concepts of $k$--positivity and complete positivity. Completely positive (CP) maps provide the proper mathematical description of physically implementable quantum channels, and their structure is elegantly encoded in the Choi--Jamio{l}kowski isomorphism, which identifies CP maps with positive semidefinite operators on $H_d \otimes H_d$. Through this correspondence, positive but not completely positive maps give rise to entanglement witnesses and hence constitute a key tool in the study of quantum correlations.



A unital linear map $\Phi : B(H_d) \to B(H_d)$ is said to satisfy the Kadison--Schwarz inequality if
\begin{equation}\label{eq:KS}
  \Phi(X^* X)\geq\Phi(X)^* \Phi(X),
  \qquad X \in B(H_d).
\end{equation}
Maps fulfilling \eqref{eq:KS} will be called KS operators. Every unital completely positive map defines a KS operator. However, the converse is not true, that is, the KS property is strictly stronger than positivity and strictly weaker than complete positivity \cite{Ev,Tomiyama1985,Choi-1980}. Kadison--Schwarz maps have appeared in various contexts---for example, in the analysis of dynamical maps and divisibility, and in connections to Jordan structures and operator inequalities---their systematic study in higher dimensions is still relatively limited compared to the rich theory of completely positive maps \cite{Rob,majewski-2001,DM2019,Majewski-2019,DM20,Sun1,Alex,Amato,GF,CB24,ZM25,vomEnde}.

For $k\in\{1,\dots,d\}$ we say that a unital map $\Phi$ is a \emph{$k$--KS operator} if the aplification $\Phi_k :=\mathrm{id}_k \otimes \Phi$ satisfies the Kadison--Schwarz inequality on $M_k(B(H_d))$. Now,
any unital $k$-positive map defines $(k-1)$-KS operator \cite{Tomiyama1985,Ev}. In this paper we show how to {\em upgrade} a $k$-positive unital and trace-preserving map $\Phi$ to a $k$-KS operator.  Given a map $\Phi$ we analyze the following classes of maps

\begin{equation}
    \Lambda^-_{a}(X)
  \;=\;
  \frac{1}{d-a}\,\big( \tr(X)\,\id_d - a\Phi(X) \big),
\end{equation}
and

\begin{equation}
    \Lambda^+_{a}(X)
  =  \frac{a}{d}\,\tr(X)\,\id_d + (1-a)\Phi(X),
\end{equation}
and provide conditions upon the parameter  $a \in \mathbb{R}$ which do guarantee that the above map $\Lambda^\pm_a$ are $k$-KS. In the special case when $\Phi$ is an identity or transposition map the necessary and sufficient conditions for $k$-KS property were derived by Tomiyama \cite{Tomiyama1985}. Hence, our result may be considered as a generalization of \cite{Tomiyama1985}. Note, however, that contrary to \cite{Tomiyama1985} we provide only sufficient conditions for $k$-KS.

In analogy to decomposable positive maps we introduce the notion of \emph{KS--decomposability}. A unital positive map $\Phi:B(H_d)\to B(H_d)$ is called KS--decomposable if it can be written as a convex combination
\[
  \Phi = \lambda \Phi_1 + (1-\lambda)\Phi_2,\qquad 0\le \lambda \le 1,
\]
where $\Phi_1$ is KS and $\Phi_2$ is co-KS, meaning $ \Phi(X^* X)\geq\Phi(X) \Phi(X)^*$. We show that this property is strictly stronger than standard decomposability and derive a refined operator inequality for KS--decomposable maps which strengthens a classical result of St{o}rmer for positive maps with $\Phi(\id_d)\le \id_d$. As an application, we prove that if $\Phi$ is an identity map or transposition, then $\Lambda^\pm_a$ are KS--decomposable in suitable parameter regimes.

The paper is organized as follows. In Section~2 we collect notation and basic facts about positive, completely positive and KS maps on $B(H_d)$. Section~3 provides sufficient conditions for $k$-KS within families $\Lambda^\pm_a$.   Section~4 discusses the very notion of KS--decomposability.
Final comments are collected in Section~5.

\section{Preliminaries}

Consider a finite dimensional Hilbert space  $\ch_d$, ${\rm dim}{\ch} = d$, and the set $B(\ch_d)$
    of operators acting on ${\ch}_d$.  For positive semidefinite
    operators $R$ we write $R \geq 0$. Denote a set of
    all positive semidefinite operators by $B(\ch_d)_+$.
    A linear map $\Phi$ is positive if $\Phi(B(\ch_d)_+) \subset B(\ch_d)_+$. By ${\id_d}$ denote the
    identity transformation on $B(\ch_d)$.
     The linear map
    $\Phi$ is called $k$-positive if the map $\Phi_k := {\rm id}_k \otimes \Phi$
    is positive.  A linear
    map $\Phi$ is called completely positive if it is \textit{$k$-positive} for
    all $k \in \bn$.  $\Phi$ is decomposable if
$\Phi = \Phi_1 + \Phi_2$,     where $\Phi_1$ is completely positive, and $\Phi_2$ completely copositive, i.e. completely positive composed with transposition map.

\begin{defn} Let $\Phi:B(\ch_d)\to B(\ch_d)$ be a unital linear mapping. Then it satisfies
\begin{itemize}
\item[1.] \textit{Kadison-Schwarz (KS) condition} if
\begin{equation}\label{KS1}
\Phi(X^*X)\geq \Phi(X)^*\Phi(X), \ \ \forall X\in B(\ch_d).
\end{equation}
\item[2.]  \textit{co-Kadison-Schwarz (co-KS) condition} if
\begin{equation}\label{cKS1}
\Phi(X^*X)\geq \Phi(X)\Phi(X)^*, \ \ \forall X\in B(\ch_d).
\end{equation}
\end{itemize}
\end{defn}

 In what follows, a mapping satisfying KS-condition will be
called KS-operator. Note that every unital 2-positive map is KS-map, and a seminal  result of Kadison states that any positive unital map
satisfies the inequality \eqref{KS1} for all self-adjoint elements
$X$. By $\mathcal{KS}$ we denote the set of all KS-operators mapping
from $B(\ch_d)$ to $B(\ch_d)$.

\begin{rem} Woronowicz shown \cite{woronowicz-1976} that any unital decomposable positive map satisfies so-called strong Kadison inequality, that is, for every $Y $ such that both  $Y\geq X^*X$ and $Y \geq XX^*$ one has

\[ \Phi(Y) \geq \Phi(X)^* \Phi(X) \ , \ \ {\rm and} \ \  \Phi(Y) \geq \Phi(X) \Phi(X)^* . \]
Woronowicz conjectured \cite{woronowicz-1976} that every unital positive map satisfies the strong Kadison inequality. Kirchberg shown that Woronowicz conjecture is not true \cite{Kirchberg}, see also \cite{Tang} for an explicit counterexample.
\end{rem}

\begin{thm}\label{ks-s}\cite{MA2010} The following assertions hold true:
\begin{enumerate}
\item[(i)] the set $\mathcal{KS}$ is convex; \item[(ii)] Let $U,V$
be unitaries in $B(\ch_d)$, then for any $\Phi\in \mathcal{KS}$
the mapping $\Psi_{U,V}(X)=U\Phi(VXV^*)U^*$ belongs to
$\mathcal{KS}$.
\end{enumerate}
\end{thm}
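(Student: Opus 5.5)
For (i), I will take $\Phi,\Psi\in\mathcal{KS}$ and $\lambda\in[0,1]$, and put $\Lambda=\lambda\Phi+(1-\lambda)\Psi$. Unitality of $\Lambda$ is immediate. The KS property for $\Lambda$ should follow from the operator convexity of the map $(A,B)\mapsto$ "squared modulus". Concretely, for any operators $A,B$ one has
\[
\lambda A^*A+(1-\lambda)B^*B-\big(\lambda A+(1-\lambda)B\big)^*\big(\lambda A+(1-\lambda)B\big)=\lambda(1-\lambda)(A-B)^*(A-B)\ge 0,
\]
which I will check by expanding the product directly. With this identity in hand, for fixed $X$ I set $A=\Phi(X)$ and $B=\Psi(X)$. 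Then
\[
\Lambda(X^*X)=\lambda\Phi(X^*X)+(1-\lambda)\Psi(X^*X)\ge \lambda\Phi(X)^*\Phi(X)+(1-\lambda)\Psi(X)^*\Psi(X)\ge \Lambda(X)^*\Lambda(X),
\]
where the first inequality uses the KS property of $\Phi$ and of $\Psi$, and the second uses the identity above. This proves convexity.

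For (ii), $\Psi_{U,V}$ is linear and unital, since $\Psi_{U,V}(\id_d)=U\Phi(VV^*)U^*=U\Phi(\id_d)U^*=\id_d$. Given $X$, I will set $Y=VXV^*$. Then $Y^*Y=VX^*XV^*$ because $V^*V=\id_d$. The KS inequality for $\Phi$ applied to $Y$ gives
\[
\Psi_{U,V}(X^*X)=U\Phi(Y^*Y)U^*\ge U\Phi(Y)^*\Phi(Y)U^*=U\Phi(Y)^*U^*\,U\Phi(Y)U^*=\Psi_{U,V}(X)^*\Psi_{U,V}(X).
\]
Here the inequality is preserved under the congruence $Z\mapsto UZU^*$, and the middle equality inserts $U^*U=\id_d$.

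There is no real obstacle in either part. The only point needing care is the algebraic identity in (i), and that is a routine expansion.
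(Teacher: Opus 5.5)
Your proof is correct and complete. The identity $\lambda A^*A+(1-\lambda)B^*B-(\lambda A+(1-\lambda)B)^*(\lambda A+(1-\lambda)B)=\lambda(1-\lambda)(A-B)^*(A-B)$ holds, and in (ii) conjugating by $U$, together with $V^*V=\id_d$ and $U^*U=\id_d$, gives exactly what is needed.

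The paper does not prove this result itself; it quotes it from the literature. Your argument is the standard one, and the identity you use in (i) is the same expansion the paper carries out in \eqref{KS-d4} when proving Theorem~\ref{KS-d}.
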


\begin{defn}

A unital positive mapping $\Phi:B(\ch_d)\to B(\ch_d)$ is called
\textit{KS-decomposable} if there exist a unital KS and co-KS
operators $\Phi_1$ and $\Phi_2$, respectively, such that
$$
\Phi=\l\Phi_1+(1-\l)\Phi_2
$$
for some $\l\in[0,1]$.

\end{defn}

\begin{defn}
Let $\Phi:B(\ch_d)\to B(\ch_d)$ be a unital linear map and
let $k \in \{1,\dots,d\}$. We say that $\Phi$ is a \emph{$k$--Kadison--Schwarz
operator} (or simply a \emph{$k$--KS operator}) if the amplified map
\[
  \Phi_k := \operatorname{id}_k \otimes \Phi : M_k(B(\ch_d)) \longrightarrow M_k(B(\ch_d))
\]
satisfies the Kadison--Schwarz inequality, that is,
\[
  \Phi_k(X^{*}X)
  \geq
  \Phi_k(X)^{*}
  \Phi_k(X),
  \qquad \forall\, X \in M_k(B(\ch_d)).
\]
\end{defn}
Equivalently, a unital map $\Phi$ is $k$-KS iff for any set of operators $\{X_1,\ldots,X_k\}\in B(\ch_d)$ one has

\begin{equation}
    \sum_{i,j=1}^k E_{ij} \otimes\Big[\Phi(X_i^* X_j) - \Phi(X_i)^* \Phi(X_j) \Big] \geq 0
\end{equation}
where $E_{ij}$ are matrix units in $M_k$.  In particular a unital map $\Phi$ is 2-KS  and only if
\begin{equation}\label{2KS2}
\left(
      \begin{array}{cc}
        \Phi(A^*A)  & \Phi(A^*B) \\
       \Phi(B^*A) & \Phi(B^*B) \\
         \end{array}
    \right)\geq
    \left(
      \begin{array}{cc}
        \Phi(A)^*\Phi(A)  & \Phi(A)^*\Phi(B) \\
       \Phi(B)^*\Phi(A) & \Phi(B)^*\Phi(B) \\
         \end{array}
    \right)
\end{equation}
for any $A,B \in B(\ch_d)$. Interestingly, one proves

\begin{thm}\cite{Ev}
Let $\Phi: B(\ch_d)\to B(\ch_d)$ be unital
$(k+1)$-positive. Then $\Phi$ is $k$-KS-operator.
\end{thm}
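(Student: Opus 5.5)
The plan is to reduce the $k$--KS inequality for $\Phi$ to positivity of a single block matrix of size $k+1$, and then apply $(k+1)$-positivity to that matrix. Fix $X_1,\dots,X_k\in B(\ch_d)$. We must show that the $k\times k$ block matrix
\[
  \big[\Phi(X_i^*X_j)-\Phi(X_i)^*\Phi(X_j)\big]_{i,j=1}^k
\]
is positive semidefinite in $M_k(B(\ch_d))$. Note that $\Phi_k=\mathrm{id}_k\otimes\Phi$ is unital and that $\Phi_k(X)^*=\Phi_k(X^*)$, since a positive map is Hermiticity preserving.

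The key construction is the $(k+1)\times(k+1)$ block matrix
\[
  M=\begin{pmatrix} \id_d & X_1 & \cdots & X_k\\ X_1^* & X_1^*X_1 & \cdots & X_1^*X_k\\ \vdots & \vdots & & \vdots\\ X_k^* & X_k^*X_1&\cdots & X_k^*X_k\end{pmatrix}.
\]
It factors as $M=R^*R$, where $R$ is the block row $R=(\id_d, X_1,\dots,X_k)$ placed in the first block row of a $(k+1)\times(k+1)$ block matrix with zeros elsewhere. Hence $M\ge 0$ in $M_{k+1}(B(\ch_d))$.

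Apply $(k+1)$-positivity and unitality to $M$. This gives
\[
  \begin{pmatrix} \id_d & B\\ B^* & C\end{pmatrix}\ge 0,\qquad B=(\Phi(X_1),\dots,\Phi(X_k)),\quad C=[\Phi(X_i^*X_j)]_{i,j}.
\]
Here we used $\Phi(X_i^*)=\Phi(X_i)^*$ to identify the off-diagonal block as $B^*$.

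Finally, use the standard Schur complement criterion: a block operator matrix with top-left block $\id$ is positive if and only if $C-B^*B\ge 0$. The cleanest way to see this is to conjugate by the invertible upper-triangular matrix $\begin{pmatrix}\id & -B\\ 0 & \id\end{pmatrix}$, which yields $\mathrm{diag}(\id,\,C-B^*B)\ge 0$. Since $B^*B=[\Phi(X_i)^*\Phi(X_j)]_{i,j}$, this is exactly the claimed $k$-KS inequality.

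The only step needing care is the bookkeeping in this last step: one must check that the off-diagonal entries match $\Phi(X_i)^*$ and that the Schur complement is $[\Phi(X_i)^*\Phi(X_j)]$. There is no genuine obstacle beyond this.
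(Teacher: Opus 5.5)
The paper gives no proof of this statement; it is quoted from \cite{Ev}, so there is no in-paper argument to compare with. Your proof is correct and is the standard one, the $k$-fold version of Choi's argument for $2$-positive maps. Each step checks out:
\begin{itemize}
\item $M=R^*R\ge 0$ in $M_{k+1}(B(\ch_d))$.
\item $(k+1)$-positivity, together with $\Phi(\id_d)=\id_d$ and $\Phi(X_i^*)=\Phi(X_i)^*$, gives the positive block matrix $\begin{pmatrix}\id_d & B\\ B^* & C\end{pmatrix}$.
\item Congruence by $\begin{pmatrix}\id_d & -B\\ 0 & \id_d\end{pmatrix}$ yields $C-B^*B=\big[\Phi(X_i^*X_j)-\Phi(X_i)^*\Phi(X_j)\big]_{i,j}\ge 0$.
\end{itemize}

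The one step you take from the paper without checking is the reduction from a general $X=(x_{mj})\in M_k(B(\ch_d))$ to a single row $(X_1,\dots,X_k)$. It deserves one line:
\[
\Phi_k(X^*X)-\Phi_k(X)^*\Phi_k(X)=\sum_{m=1}^k\big[\Phi(x_{mi}^*x_{mj})-\Phi(x_{mi})^*\Phi(x_{mj})\big]_{i,j=1}^k .
\]
This is a sum of row-type terms, each of which your argument shows is positive.

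Two minor remarks. The Schur-complement step only uses $P\ge 0\Rightarrow S^*PS\ge 0$, so invertibility of $S$ plays no role. Your opening remark that $\Phi_k$ is unital and $*$-preserving is not needed; what you actually use is $\Phi(\id_d)=\id_d$ and $\Phi(X_i^*)=\Phi(X_i)^*$ for $\Phi$ itself.
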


The vector space $B(\ch_d)$  endowed  with the Hilbert-Schmidt inner product
$(X,Y)_{HS}:=\tr(X^*Y)$
becomes a Hilbert space $\mathcal H_{HS}$. The corresponding  Hilbert-Schmidt norm reads $\|X\|_{HS} := \sqrt{\tr\, X^*X}$. For any linear map $\Phi : B(\ch_d) \to B(\ch_d)$ one defines its dual $\Phi^* : B(\ch_d) \to B(\ch_d)$ via

\begin{equation}\label{}
  (\Phi^*(A),B)_{HS} = (A,\Phi(B))_{HS} ,
\end{equation}
for any $A,B \in B(\ch_d)$. Notice that $\Phi$ is trace-preserving if and only if $\Phi^*$ is unital.
A linear map $\Phi : B(\ch_d) \to B(\ch_d)$. A linear mapping $\Phi : B(\ch_d) \to B(\ch_d)$ is  called \textit{Hilbert--Schmidt contraction}, if

\begin{equation}
    \|\Phi(X)\|_{HS}\leq \|X\|_{HS} ,
\end{equation}
for all $X\in B(\ch_d)$. Let us introduce the following completely positive, trace-preserving and unital map $\Delta : B(\ch_d) \to B(\ch_d)$

\[  \Delta(X) = \frac 1d \id_d {\rm Tr}\,X . \]
In quantum information it represents completely depolarizing quantum channel. Note that $\Phi$ is a Hilbert-Schmidt contraction iff

\begin{equation}   \label{Delta-HS}
    \Delta(\Phi(X)^* \Phi(X)) \leq \Delta(X^*X) .
\end{equation}
Consider now the amplification

\[   \Delta_k := {\rm id}_k \otimes \Delta :  M_k(B(\ch_d))\to M_k(B(\ch_d))  , \]
that is,

\[  \Delta_k(X) = \frac 1d {\rm Tr}_2 X \otimes  \id_d \, \]
where the partial trace is defined as follows: for any $X = M \otimes Y \in M_k(B(\ch_d)) $

\[   {\rm Tr}_2 X = M\,  {\rm Tr}\, Y \in M_k . \]

\begin{lem}\label{3KS-T1}
For any $k=1,2,\ldots$ the mapping $\Delta_k$ has the following properties:\\
\begin{itemize}
  \item[(i)] $\Delta_k \, \Delta_k =  \Delta_k $, that is $\Delta_k$ is a projector,  \\
  \item[(ii)] $\Delta_k(\Delta_k(X)^*\Delta_k(X))=\Delta_k(X)^*\Delta_k(X)$ for all $X\in M_k(B(\ch_d))$; \\
  \item[(iii)] if $\Delta_k(Y)=0$, then $\Delta_k(Y\Delta_k(X))=0$  for all $X\in M_k(B(\ch_d))$.
\end{itemize}
\end{lem}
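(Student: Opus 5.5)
The plan is to work directly with the explicit form $\Delta_k(X)=\frac1d \tr_2(X)\otimes\id_d$. The key observation is that the range of $\Delta_k$ is exactly the subalgebra $M_k\otimes\id_d$, and that $\Delta_k$ is a conditional expectation onto it. We will use the identification $M_k(B(\ch_d))\cong M_k\otimes B(\ch_d)$ and extend everything by linearity from elementary tensors $M\otimes Y$.

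For (i), take $X=M\otimes Y$. Then $\Delta_k(X)=\frac1d (\tr Y)\,M\otimes\id_d$. Applying $\Delta_k$ again gives $\frac1d(\tr Y)\,M\otimes \frac1d\tr(\id_d)\,\id_d=\Delta_k(X)$, since $\tr\id_d=d$. Linearity then yields $\Delta_k^2=\Delta_k$.

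For (ii) and (iii), we first establish the bimodule property
\[
\Delta_k\big((A\otimes\id_d)\,Z\,(B\otimes\id_d)\big)=(A\otimes\id_d)\,\Delta_k(Z)\,(B\otimes\id_d),\qquad A,B\in M_k .
\]
On elementary tensors $Z=M\otimes Y$ this reads $\Delta_k(AMB\otimes Y)=\frac1d(\tr Y)\,AMB\otimes\id_d$, which is immediate, and linearity extends it to all $Z$.

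Part (ii) then follows by writing $\Delta_k(X)=A\otimes\id_d$ with $A=\frac1d\tr_2X$. We get $\Delta_k(X)^*\Delta_k(X)=A^*A\otimes\id_d$, which lies in the range of $\Delta_k$ and is therefore fixed by $\Delta_k$, exactly as in (i) (equivalently, by the bimodule property applied to $Z=\id$). For (iii), the bimodule property with $B=\frac1d\tr_2X$ and $A=\id_k$ gives $\Delta_k(Y\Delta_k(X))=\Delta_k(Y)\,(B\otimes\id_d)=0$.

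The only point requiring care is that every operator in $M_k(B(\ch_d))$ is a finite sum of elementary tensors, so that identities verified on $M\otimes Y$ extend to all of $M_k(B(\ch_d))$. This holds trivially in finite dimensions, so I expect no real obstacle; the bimodule identity is the one substantive step.
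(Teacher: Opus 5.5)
The paper states this lemma without proof (the only appendix proofs are for Theorem~\ref{HS} and the KS-decomposability of $R_a$), so there is nothing to compare against. Your argument is correct and complete, and it fills that gap.

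The three parts check out as follows:
\begin{itemize}
\item Reducing to elementary tensors $M\otimes Y$ and extending by linearity is legitimate, since both sides of each identity are linear in the relevant variable.
\item Parts (i) and (ii) both come down to $\Delta_k(A\otimes\id_d)=A\otimes\id_d$, which holds because $\tr\,\id_d=d$.
\item Part (iii) is exactly the right-module identity $\Delta_k\bigl(Z(B\otimes\id_d)\bigr)=\Delta_k(Z)(B\otimes\id_d)$ with $B=\frac1d\tr_2X$.
\end{itemize}

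Your two-sided bimodule identity is slightly more useful than the bare statement of the lemma. In Case~2 of the proof of Theorem~\ref{T1}, equation \eqref{X1} needs two cross terms to vanish: $\Delta_k(\Delta_k(X)^*Y)=0$ and $\Delta_k(Y^*\Delta_k(X))=0$. The first is a left-sided version of (iii), which your identity gives directly as $(A^*\otimes\id_d)\Delta_k(Y)=0$. The second needs $\Delta_k(Y^*)=0$, which follows from $\Delta_k(Z^*)=\Delta_k(Z)^*$; this is immediate from $\tr_2(Z^*)=(\tr_2 Z)^*$, though the lemma does not state it. Recording both sides of the module property together with $*$-preservation is what the paper actually uses later.
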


In what follows we generalize (\ref{Delta-HS}) and consider linear maps $\Phi : B(\ch_d) \to B(\ch_d) $ which satisfy the following property

\begin{equation}  \label{Phi-k}
     \Delta_k(\Phi_{k}(X)^*\Phi_{k}(X)) \leq \Delta_k(X^*X) ,
\end{equation}
where $\Phi_{k} :=\mathrm{id}_k\otimes \Phi$.

\begin{prop}
Let $\Phi : B(\ch_d) \to B(\ch_d)$ be a unital and trace-preserving linear map.
If $\Phi$ is a $k$-KS operator, then $\Phi$ satisfies (\ref{Phi-k}).
\end{prop}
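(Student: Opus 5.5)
The plan is to apply the positive map $\Delta_k$ to both sides of the $k$-KS inequality and then use trace preservation of $\Phi$ to remove $\Phi_k$ on the left-hand side.

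\emph{Step 1.} Since $\Phi$ is $k$-KS, for every $X\in M_k(B(\ch_d))$ we have
\[
\Phi_k(X^*X)\geq \Phi_k(X)^*\Phi_k(X).
\]
The map $\Delta$ is completely positive, so $\Delta_k=\mathrm{id}_k\otimes\Delta$ is positive. Applying $\Delta_k$ to both sides preserves the order:
\[
\Delta_k\big(\Phi_k(X^*X)\big)\geq \Delta_k\big(\Phi_k(X)^*\Phi_k(X)\big).
\]

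\emph{Step 2.} We show that $\Delta_k\circ\Phi_k=\Delta_k$ whenever $\Phi$ is trace-preserving. By linearity it suffices to check this on elementary tensors $X=M\otimes Y$ with $M\in M_k$ and $Y\in B(\ch_d)$:
\[
\Delta_k(\Phi_k(M\otimes Y))=\Delta_k(M\otimes \Phi(Y))=\tfrac1d M\,\tr\Phi(Y)\otimes\id_d=\tfrac1d M\,\tr Y\otimes \id_d=\Delta_k(M\otimes Y).
\]
In other words, $\Delta\circ\Phi=\Delta$ is exactly the statement $\tr\Phi(Y)=\tr Y$, and tensoring with $\mathrm{id}_k$ preserves this identity.

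\emph{Step 3.} Substituting Step 2 into the left-hand side of Step 1 gives
\[
\Delta_k(X^*X)\geq\Delta_k\big(\Phi_k(X)^*\Phi_k(X)\big),
\]
which is precisely (\ref{Phi-k}). Unitality of $\Phi$ enters only through the definition of a $k$-KS operator.

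There is no real obstacle here. The only point needing care is the identity $\Delta_k\Phi_k=\Delta_k$. It must be verified on a spanning set of elementary tensors and then extended by linearity, because $M_k(B(\ch_d))\cong M_k\otimes B(\ch_d)$ is spanned by such products.
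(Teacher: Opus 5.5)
Your proof is correct and matches the paper's argument: apply $\Delta_k$ to the $k$-KS inequality, then use trace preservation to get $\Delta_k\circ\Phi_k=\Delta_k$. You also state explicitly that $\Delta_k$ is positive (because $\Delta$ is completely positive), which the paper uses without comment.
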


\begin{proof} since $\Phi$ is $k$-KS one has

\[
\Phi_{k}(X^*X) \geq \Phi_{k}(X)^* \Phi_{k}(X)
 ,
\]
{for all } $X \in M_k(B(\ch_d))$,  and hence

\[
\Delta_k(\Phi_{k}(X^*X)) \geq \Delta_k( \Phi_{k}(X)^* \Phi_{k}(X) ) .
\]
Observe that since $\Phi$ is trace-preserving

\[ \Delta_k(\Phi_{k}(X^*X))  = ({\rm id}_k \otimes {\rm Tr})({\rm id}_k \otimes \Phi)(X^*X) \otimes \id_d  =
({\rm id}_k \otimes {\rm Tr})(X^*X) \otimes \id_d = \Delta_k(X^*X) ,
\]
which completes the proof.
\end{proof}

\begin{rem}
    If  $\Phi={\rm id}$, then it is evident, that $\Phi$ is $k$-KS operator and hence satisfies (\ref{Phi-k}).
Interestingly, transposition map $\Phi = T$ is not even 2-positive but it satisfies (\ref{Phi-k}) as well.
\end{rem}

\begin{thm}\label{HS}
Let $\Phi$ be a Hilbert--Schmidt contraction. Then for every $k \in \mathbb N$, the map $\Phi_k$ satisfies (\ref{Phi-k}).
\end{thm}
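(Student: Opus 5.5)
The plan is to reduce the operator inequality (\ref{Phi-k}) to an inequality between two $k\times k$ Gram matrices, and then to check that inequality vector by vector using only the Hilbert--Schmidt contractivity of $\Phi$.

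\emph{Step 1: reduce to $M_k$.} Since $\Delta_k(Z)=\frac1d\,{\rm Tr}_2 Z\otimes \id_d$, and $A\mapsto A\otimes\id_d$ preserves the order, it suffices to prove
\[
{\rm Tr}_2\big(\Phi_k(X)^*\Phi_k(X)\big)\le {\rm Tr}_2(X^*X)\quad\text{in } M_k .
\]

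\emph{Step 2: compute both sides.} Write $X=\sum_{i,j=1}^k E_{ij}\otimes X_{ij}$ with $X_{ij}\in B(\ch_d)$. Then $\Phi_k(X)=\sum_{i,j}E_{ij}\otimes\Phi(X_{ij})$, and
\[
X^*X=\sum_{i,j}E_{ij}\otimes\sum_{l}X_{li}^*X_{lj}.
\]
Taking the partial trace gives
\[
{\rm Tr}_2(X^*X)=\sum_{i,j}E_{ij}\,G_{ij},\qquad G_{ij}=\sum_{l=1}^k (X_{li},X_{lj})_{HS},
\]
and in the same way ${\rm Tr}_2(\Phi_k(X)^*\Phi_k(X))=\sum_{i,j}E_{ij}G'_{ij}$ with $G'_{ij}=\sum_l(\Phi(X_{li}),\Phi(X_{lj}))_{HS}$. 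Thus $G$ and $G'$ are Gram matrices of the columns of $X$ and of $\Phi_k(X)$ in the Hilbert space $\mathcal H_{HS}^{\oplus k}$.

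\emph{Step 3: compare the quadratic forms.} Fix $c\in\mathbb C^k$. By sesquilinearity of the inner product and linearity of $\Phi$,
\[
c^*G'c=\sum_l\Big\|\Phi\Big(\sum_j c_jX_{lj}\Big)\Big\|_{HS}^2\le\sum_l\Big\|\sum_j c_jX_{lj}\Big\|_{HS}^2=c^*Gc .
\]
The inequality is exactly the Hilbert--Schmidt contraction hypothesis, applied to each $l$ separately. Hence $G'\le G$, and Step 1 finishes the proof.

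The only point needing care is the index bookkeeping in Step 2. One must make sure that the partial trace turns each $(i,j)$ block into a sum of HS inner products, with the correct conjugation convention, so that linearity of $\Phi$ lets the coefficients $c_j$ be pulled inside $\Phi$ in Step 3. No positivity of $\Phi$ is used, which is consistent with the earlier Remark that the transposition map satisfies (\ref{Phi-k}).
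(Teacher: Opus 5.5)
Your proof is correct and follows essentially the same route as the paper's: both identify ${\rm Tr}_2(X^*X)$ and ${\rm Tr}_2(\Phi_k(X)^*\Phi_k(X))$ as Gram matrices of the columns of $X$ and $\Phi_k(X)$ in $\ell^2_k\otimes\mathcal H_{HS}$, then compare the quadratic forms using contractivity of ${\rm id}\otimes\Phi$. The only cosmetic difference is that you verify this contractivity explicitly row by row (summing over $l$), whereas the paper simply asserts that $S=I\otimes\Phi$ is a contraction.
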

For the proof cf. Appendix. The above Theorem shows that it is not difficult  to construct maps which satisfy (\ref{Phi-k}).

\section{Class of $k$-Kadison-Schwarz operators}

In this section, we analyze two classes of $k$-Kadison-Schwarz operators.

\subsection{Generalized Reduction Maps}

Let us consider the following so-called reduction mapping
\begin{equation}\label{F0}
R_a(X)=\frac{1}{d-a}(\id_d {\rm Tr} X  - a \, X ), \quad X\in B(\ch_d).
\end{equation}
In what follows, we frequently use the following fact.

\begin{prop}
    \label{L1}\cite{Tomiyama1985} The mapping $R_a$ is $k$-positive iff $a\leq 1/k$ ($k\leq d$).
\end{prop}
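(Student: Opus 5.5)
We must prove that $R_a$ is $k$-positive if and only if $a\le 1/k$, for $k\le d$. The case $a=d$ is excluded implicitly. Since $R_a(\id_d)=\id_d$, the map is unital, and positivity of $(d-a)^{-1}$ matters: for $a<d$ the prefactor is positive, so $k$-positivity of $R_a$ is equivalent to $k$-positivity of the unnormalized map
\[
\tilde R_a(X)=\id_d\,{\rm Tr}X-aX .
\]
The plan is to test $\mathrm{id}_k\otimes\tilde R_a$ on rank-one positive elements of $M_k\otimes B(\ch_d)$. Every positive element is a sum of such projections $|\psi\>\<\psi|$ with $\psi\in\mathbb C^k\otimes\ch_d$, so this suffices.

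For $\psi$ we compute
\[
(\mathrm{id}_k\otimes\tilde R_a)(|\psi\>\<\psi|)=\rho_1\otimes\id_d-a|\psi\>\<\psi|, \qquad \rho_1={\rm Tr}_2|\psi\>\<\psi| .
\]

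\emph{Sufficiency.} Assume $a\le 1/k$. If $a\le 0$, the expression is a sum of positive terms and there is nothing to prove. If $0<a\le 1/k$, we need $\<\phi|\rho_1\otimes\id_d|\phi\>\ge a|\<\phi|\psi\>|^2$ for all $\phi$. Write the Schmidt decomposition $\psi=\sum_{i=1}^r\sqrt{\lambda_i}\,e_i\otimes f_i$ with $r\le\min(k,d)$. Cauchy--Schwarz gives
\[
|\<\phi|\psi\>|^2=\Big|\sum_i\sqrt{\lambda_i}\<\phi|e_i\otimes f_i\>\Big|^2\le r\sum_i\lambda_i|\<\phi|e_i\otimes f_i\>|^2 .
\]
On the other hand,
\[
\<\phi|\rho_1\otimes\id_d|\phi\>=\sum_i\lambda_i\|(\<e_i|\otimes\id_d)\phi\|^2\ge\sum_i\lambda_i|\<\phi|e_i\otimes f_i\>|^2 .
\]
Combining the two, the required inequality holds whenever $ar\le 1$, and this is guaranteed by $a\le 1/k$ together with $r\le k$.

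\emph{Necessity.} Assume $a>1/k$. Take the maximally entangled vector $\psi=k^{-1/2}\sum_{i=1}^k e_i\otimes f_i$, which exists because $k\le d$. Then $\rho_1=\id_k/k$. Evaluating on $\phi=\psi$ gives
\[
\<\psi|(\mathrm{id}_k\otimes\tilde R_a)(|\psi\>\<\psi|)|\psi\>=1/k-a<0,
\]
so $R_a$ is not $k$-positive.

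\emph{The regime $a>d$.} Here the prefactor $(d-a)^{-1}$ is negative, so we must show $R_a$ fails even $1$-positivity. For a rank-one projection $P$ we get $R_a(P)=(\id_d-aP)/(d-a)$. The eigenvalue on the range of $P^\perp$ is $1/(d-a)<0$, and this range is nonzero since $d\ge2$ (for $d=1$ the statement degenerates). So $R_a$ is not positive, consistent with $a>1/k$.

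The main obstacle is the Cauchy--Schwarz step with the Schmidt rank, which is what produces the sharp constant $1/k$. The rest of the argument is bookkeeping.
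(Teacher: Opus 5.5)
Your proof is correct. The paper gives no argument for this proposition and simply quotes Tomiyama \cite{Tomiyama1985}, so what you have written is a self-contained replacement for a citation rather than a variant of an in-paper proof.

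The route one usually sees goes through the Choi matrix. For $\tilde R_a$ the Choi matrix is $C=\id_d\otimes\id_d-a\,d\,P^+$, with $P^+$ the maximally entangled projector. Then $k$-positivity is equivalent to $\langle\phi|C|\phi\rangle\ge 0$ for all $\phi$ of Schmidt rank at most $k$. The question therefore reduces to the fact that the maximum of $\langle\phi|P^+|\phi\rangle$ over unit vectors of Schmidt rank at most $k$ equals $k/d$.

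Your version works directly with the inputs $|\psi\rangle\langle\psi|$ of $\mathrm{id}_k\otimes\tilde R_a$ and Schmidt-decomposes $\psi$. It therefore needs neither the Choi-matrix characterization of $k$-positivity nor the Schmidt-number overlap bound. Your Cauchy--Schwarz step with $r\le k$ does the job of $\langle\phi|P^+|\phi\rangle\le k/d$, and the maximally entangled witness shows the constant is sharp.

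A side benefit is that your identity $(\mathrm{id}_k\otimes\tilde R_a)(Z)=\mathrm{Tr}_2 Z\otimes\id_d-aZ=d\,\Delta_k(Z)-aZ$ is exactly the form in which the paper later uses the proposition, e.g.\ in the proof of Theorem~\ref{KK} with $a=1/k$.

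You also handle what the statement leaves implicit. The value $a=d$ must be excluded. For $a>d$ the map is not even positive when $d\ge 2$. Your caveat about $d=1$ is genuinely needed: then $R_a=\mathrm{id}$ for every $a\neq 1$, so the ``only if'' direction is false.

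Finally, $k\le d$ is used only in the necessity part. For $k>d$, your sufficiency argument (with $r\le\min(k,d)$) shows that the threshold becomes $1/d$.
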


%


Now, let $\Phi:B(\ch_d)\to B(\ch_d)$ be a positive unital, trace-preserving map satisfying (\ref{Phi-k}). We define the following mapping
\begin{equation}\label{FS1}
\Lambda^-_{a}=\frac{1}{d-a}(d \Delta -a \Phi)
\end{equation}
It is clear that $\Lambda^-_{a}$ is a unital trace-preserving mapping.
Consider now the corresponding amplification

\[   \Lambda^-_{a;k} := {\rm id}_k \otimes \Lambda^-_a = \frac{1}{d-a}\big(d\,\Delta_k -a\, \Phi_{k}\big) .\]


\begin{thm}\label{T1} Assume $\Phi$ is $k$-positive, unital and satisfies (\ref{Phi-k}). Then the mapping $\Lambda^-_{a}$ defines a $k$-KS operator if
\begin{equation}\label{4KS2}
0\leq a\leq\frac{d}{kd +1}.
\end{equation}
\end{thm}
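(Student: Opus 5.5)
Write $c=\frac{1}{d-a}$, so that $\Lambda^-_{a;k}=c(d\Delta_k - a\Phi_k)$. The plan is to expand the Kadison--Schwarz defect of $\Lambda^-_{a;k}$ directly. Lemma \ref{3KS-T1}(i) and (iii) dispose of the cross terms, (\ref{Phi-k}) controls the $\Phi_k(X)^*\Phi_k(X)$ term, and $k$-positivity of $\Phi$ together with Proposition \ref{L1} handles what remains. I will also use that $\Phi$ is trace preserving, which the paper assumes throughout this section. Then $\Delta_k\Phi_k=\Phi_k\Delta_k=\Delta_k$ (for the second identity, unitality gives $\Phi(\id_d)=\id_d$).

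First, I reduce to $\Delta_k(X)=0$. Write $X=Y+Z$ with $Z=\Delta_k(X)=M\otimes\id_d$. Then $\Lambda^-_{a;k}(Z)=Z$ and $\Lambda^-_{a;k}(Y)=-ca\Phi_k(Y)$. The maps $\Lambda^-_{a;k}$, $\Phi_k$ and $\Delta_k$ are all $M_k$-bimodule maps for elements of the form $N\otimes\id_d$. Expanding both $\Lambda^-_{a;k}(X^*X)$ and $\Lambda^-_{a;k}(X)^*\Lambda^-_{a;k}(X)$, the $Z^*Z$ terms coincide. The cross terms are $\Lambda^-_{a;k}(Y^*Z)=\Lambda^-_{a;k}(Y)^*Z$, and this matches the product side exactly. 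Hence the defect equals $\Lambda^-_{a;k}(Y^*Y)-\Lambda^-_{a;k}(Y)^*\Lambda^-_{a;k}(Y)$, and we may assume $\Delta_k(X)=0$. This is the role of Lemma \ref{3KS-T1}(iii).

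With $\Delta_k(X)=0$, the defect becomes
\[
D = c\,d\,\Delta_k(X^*X) - c\,a\,\Phi_k(X^*X) - c^2a^2\,\Phi_k(X)^*\Phi_k(X).
\]
For the last term I will use an operator bound: $\Phi_k(X)^*\Phi_k(X)\le d\,\Delta_k(\Phi_k(X)^*\Phi_k(X))\le d\,\Delta_k(X^*X)$. The first inequality is the $k$-copy version of $P\le d\Delta(P)$ for $P\ge0$, which follows from $P\le \|P\|\id$ and $\|P\|\le \tr P$; in block form it says $P\le \id_k\otimes{\rm Tr}_2P\otimes\id_d$, and I expect this to be the main technical obstacle. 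It is true for $P=W^*W$ with $W$ in $M_k\otimes B(\ch_d)$ only when suitably formulated, and a weaker constant may be needed. The second inequality is (\ref{Phi-k}).

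For the middle term, $k$-positivity of $\Phi$ is not the right tool, since $\Phi_k(X^*X)$ needs an upper bound. Instead I intend to use $\Phi_k(P)\le kd\,\Delta_k(P)$ for $P\ge0$, obtained from $k$-positivity via $\id_k\otimes\id_d\,{\rm Tr}\otimes\ldots$ as in Tomiyama's proof of Proposition \ref{L1}: $R_{1/k}$ is $k$-positive, which gives exactly $P\le k\,d\,\Delta_k(P)$ after applying $\Phi_k$ and using $\Phi_k\Delta_k=\Delta_k$. Combining the two bounds,
\[
D\ \ge\ c\big(d - akd - c\,a^2 d\big)\Delta_k(X^*X).
\]
This is nonnegative iff $1-ak\ge \frac{a^2}{d-a}$, that is, $(d-a)(1-ak)\ge a^2$. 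Rearranged, this reads $d - a(kd+1)+a^2(k-1)\ge0$, which certainly holds when $a\le \frac{d}{kd+1}$ and $k\ge1$. The constant bookkeeping in this final inequality, and checking that the stated bound matches exactly rather than with a spare $a^2$ term, is where I would recheck the estimates.
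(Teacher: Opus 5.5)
Your reduction to $\Delta_k(X)=0$ is correct; the bimodule computation is a tidier version of the paper's Case 2. However, the bound you use for the quadratic term is false for $k\ge 2$. The inequality $P\le d\,\Delta_k(P)=\mathrm{Tr}_2P\otimes\id_d$ for $P\ge0$ in $M_k(B(\ch_d))$ says exactly that $\mathrm{id}_k\otimes R_1$ is positive, i.e. that the reduction map is $k$-positive. By Proposition \ref{L1} this holds only for $k=1$.

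Concretely, take $\psi=(e_1\otimes f_1+e_2\otimes f_2)/\sqrt2$ and $P=|\psi\rangle\langle\psi|$. Then $\langle\psi|\,\mathrm{Tr}_2P\otimes\id_d\,|\psi\rangle=\frac12<1=\langle\psi|P|\psi\rangle$. This $P$ really occurs in your setting: with $\Phi=\mathrm{id}$ and $d\ge3$, take $X=|e_1\otimes f_3\rangle\langle\psi|$. Then $\Delta_k(X)=0$ and $\Phi_k(X)^*\Phi_k(X)=P$.

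The $k=1$ argument via $\|P\|\le\tr P$ does not lift through a partial trace, and writing $P=W^*W$ adds nothing. The outcome also shows the constant must be wrong. For $k\ge2$ your final condition $d-a(kd+1)+a^2(k-1)\ge0$ still holds for some $a>\frac{d}{kd+1}$. With $\Phi=\mathrm{id}$ that would contradict the sharpness recorded in Remark \ref{C1}.

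The fix is the inequality you already use for the middle term. Since $R_{1/k}$ is $k$-positive, $P\le kd\,\Delta_k(P)$ for $P\ge0$. Hence, using \eqref{Phi-k},
$\Phi_k(X)^*\Phi_k(X)\le kd\,\Delta_k(\Phi_k(X)^*\Phi_k(X))\le kd\,\Delta_k(X^*X)$. Then
\[
D\ \ge\ \frac{d}{(d-a)^2}\big((d-a)(1-ak)-ka^2\big)\Delta_k(X^*X)=\frac{d}{(d-a)^2}\big(d-a(kd+1)\big)\Delta_k(X^*X).
\]
The $a^2$ terms cancel, and you land exactly on $a\le\frac{d}{kd+1}$. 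The spare $a^2(k-1)$ term was an artifact of the wrong constant.

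With this correction your argument is essentially the paper's. In Case 1 the paper sets $Z=a(d-a)\Phi_k(X^*X)+a^2\Phi_k(X)^*\Phi_k(X)\ge0$. It uses trace preservation and \eqref{Phi-k} to get $\Delta_k(Z)\le ad\,\Delta_k(X^*X)$, then applies the same $R_{1/k}$ bound once to $Z$. You apply it to the two summands separately, which by linearity is the same estimate.

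If you compare constants line by line, note two slips in the paper. Its displayed prefactor $\frac{d-a}{da}$ should be $\frac{d-a}{a}$, and its criterion $\frac{da}{d-a}\le\frac1k$ should be $\frac{a}{d-a}\le\frac1{kd}$. The two factor-$d$ errors cancel, giving the same threshold.
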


\begin{proof}

One has
\begin{eqnarray*}
&&\Lambda^-_{a;k}(X)^* \Lambda^-_{a;k}(X) = \frac{1}{(d-a)^2}\bigg(d^2 \Delta_k(X)^*\Delta_k(X)-a d \big(\Delta_k(X^*)\Phi_{k}(X)+\Phi_{k}(X)^*\Delta_k(X)\big)+a^2 \Phi_{k}(X)^*\Phi_{k}(X)\bigg)\\[2mm]
&&\Lambda^-_{a;k}(X)(X^*X)=\frac{1}{d-a}(d \Delta_k(X^*X)-a \Phi_{k}(X^*X))
\end{eqnarray*}
Therefore, the KS-condition for $\Lambda^-_{a;k}$ reads as follows
\begin{eqnarray}\label{4KS1}
(d-a)(d\Delta_k(X^*X)-a \Phi_{k}(X^*X))&\geq & d^2 \Delta_k(X)^*\Delta_k(X)- ad\big(\Delta_k(X^*)\Phi_{k}(X)+\Phi_{k}(X)^*\Delta_k(X)\big)\nonumber \\
&+& a^2 \Phi_{k}(X)^*\Phi_{k}(X)
\end{eqnarray}
Assume that \eqref{4KS2} holds. To prove that $\Lambda^-_{a}$ is a $k$-KS operator, it is enough to establish
\eqref{4KS1}.

Now consider two separate cases.


{\tt Case 1.} Suppose that $\Delta_k(X)=0$. Then from \eqref{4KS1} it follows that
\begin{eqnarray}\label{4KS3}
(d-a)d\Delta_k(X^*X)-a(d-a)\Phi_{k}(X^*X)-a^2 \Phi_{k}(X)^*\Phi_{k}(X)\geq 0.
\end{eqnarray}
We rewrite the last one as follows
\begin{eqnarray*}
\frac{(d-a)}{da}\Delta_k(a(d-a)X^*X+a^2X^*X)-\big(a(d-a)\Phi_{k}(X^*X)+a^2\Phi_{k}(X^*)\Phi_{k}(X)\big)\geq 0.
\end{eqnarray*}
The trace preserving condition of $\Phi$ yields  $\Delta_k(\Phi_{k}(X))=\Delta_k(X)$, therefore,
\begin{eqnarray*}
\frac{(d-a)}{da}\bigg(\Delta_k(a(d-a)\Phi_{k}(X^*X))+a^2\Delta_k(X^*X)\bigg)-\big(a(d-a)\Phi_{k}(X^*X)+a^2\Phi_{k}(X^*)\Phi_{k}(X)\big)\geq 0.
\end{eqnarray*}
Now, by means of $\Delta_k$-KS-condition , one gets
$$
\Delta_k(a(d-a)\Phi_{k}(X^*X))+a^2\Delta_k(X^*X)\geq \Delta_k(a(d-a)\Phi_{k}(X^*X))+a^2\Delta_k(\Phi_{k}(X^*)\Phi_{k}(X))
$$
Therefore, it is enough to
show
\begin{eqnarray}\label{KSw}
&&\frac{(d-a)}{da}\bigg(\Delta_k(a(d-a)\Phi_{k}(X^*X))+a^2\Delta_k(\Phi_{k}(X^*)\Phi_{k}(X))\bigg)\nonumber\\
&&-\big(a(d-a)\Phi_{k}(X^*X)+a^2\Phi_{k}(X^*)\Phi_{k}(X)\big)\geq 0.
\end{eqnarray}
Due to $k$-positivity of $\Phi$, it is clear that
$$
Z=a(d-a)\Phi_{k}(X^*X))+a^2\Phi_{k}(X^*)\Phi_{k}(X)\geq 0
$$
Hence, \eqref{KSw} means
$
\frac{(d-a)}{da}\Delta_k(Z)-Z\geq 0
$
which due to Lemma \ref{L1}, is satisfied iff
$\frac{da}{d-a} \leq \frac{1}{k}$ and hence
$$
a\leq \frac{d}{kd+1}.
$$

{\tt Case 2.}  Now assume that $\Delta_k(X)\neq 0$, and denote
$$
Y=\Delta_k(X)-X.
$$
It is clear that $\Delta_k(Y)=0$ and $X=\Delta_k(X)-Y$.
Then
\begin{eqnarray}\label{4KS5}
X^*X=\Delta_k(X)^*\Delta_k(X)-(\Delta_k(X^*)Y+\Delta_k(X)Y^*)+Y^*Y
\end{eqnarray}
Hence, by Lemma \ref{3KS-T1}, one gets
\begin{eqnarray}\label{X1}
\Delta_k(X^*X)&=&\Delta_k(X)^*\Delta_k(X)+\Delta_k(Y^*Y).
\end{eqnarray}
Now, unitality of $\Phi$ implies $\Phi_{k}(\Delta_k(X))=\Delta_k(X)$. Therefore,
\begin{eqnarray*}\label{X2}
\Delta_k(X^*)\Phi_{k}(X)+\Phi_{k}(X^*)\Delta_k(X)&=&{2}\Delta_k(X)^*\Delta_k(X)-(\Delta_k(X^*)\Phi_{k}(Y)+\Phi_{k}(Y)^*\Delta_k(X))\\[2mm]
\Phi_{k}(X^*)\Phi_{k}(X)&=&\Delta_k(X^*)\Delta_k(X)-(\Delta_k(X^*)\Phi_{k}(Y)+\Delta_k(X)\Phi_{k}(Y^*))+\Phi_{k}(Y^*)\Phi_{k}(Y)
\end{eqnarray*}
Hence, substituting all these to \eqref{4KS1} and after simple calculations, we arrive at
%
\begin{eqnarray*}
(d-a)d\Delta_k(Y^*Y)-a(d-a)\Phi_{k}(Y^*Y)-a^2\Phi_{k}(Y)^* \Phi_{k}(Y)\geq 0.
\end{eqnarray*}
which, due to the case 1, is satisfied, if \eqref{4KS2} holds.
This completes the proof.
\end{proof}

\begin{rem}\label{C1} If $\Phi$ is an identity map it was proved by Tomiyama \cite{Tomiyama1985} that $\Lambda^-_a$ is $k$-KS iff

$$
0\leq a\leq\frac{d}{kd+1} ,
$$
that is, the above condition is both necessary and sufficient in this case.
\end{rem}

\bigskip

\subsection{Generalized Depolarized Maps}

Let $\Phi:B(\ch_d)\to B(\ch_d)$ be a positive UTP $\Delta_k$-KS-operator.  We define a new mapping
 $\Lambda^+_{a}: B(\ch_d)\to B(\ch_d)$  by
\begin{equation}\label{PS1}
\Lambda^+_{a} = a \Delta +(1-a) \Phi .
\end{equation}
It is clear that $\Lambda^+_a$ is a unital trace-preserving mapping.
The corresponding amplification reads

\[  \Lambda^+_{a;k} = a \Delta_k +(1-a) \Phi_k . \]

\begin{thm}\label{KK}  Assume $\Phi$ is $k$-positive, unital and satisfies (\ref{Phi-k}).  Then the mapping $\Lambda^+_a$ is a $k$-KS operator if
\begin{equation}\label{KK1}
1 + \frac{\sqrt{4kd+1}+1}{2kd} \geq a\geq 1 - \frac{\sqrt{4kd+1}-1}{2kd}.
\end{equation}
\end{thm}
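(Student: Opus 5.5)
The plan is to follow the scheme of the proof of Theorem~\ref{T1}: reduce to the case $\Delta_k(X)=0$ and then use Proposition~\ref{L1} together with (\ref{Phi-k}). Put $b:=1-a$, so that $\Lambda^+_{a;k}=a\Delta_k+b\Phi_k$ with $a+b=1$. Condition (\ref{KK1}) is exactly $kd\,b^2+b-1\le 0$, i.e.\ $a\ge kd\,b^2$, so this is the inequality the argument should produce. The $k$-KS inequality to be proved is
\begin{equation*}
a\Delta_k(X^*X)+b\Phi_k(X^*X)\ \ge\ \big(a\Delta_k(X)+b\Phi_k(X)\big)^*\big(a\Delta_k(X)+b\Phi_k(X)\big).
\end{equation*}

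\emph{Step 1 (reduction).} Write $D=\Delta_k(X)$, $Y=D-X$, so that $\Delta_k(Y)=0$. Lemma~\ref{3KS-T1} gives $\Delta_k(X^*X)=D^*D+\Delta_k(Y^*Y)$. Unitality gives $\Phi_k(D)=D$, hence $\Phi_k(X)=D-\Phi_k(Y)$ and $\Lambda^+_{a;k}(X)=D-b\Phi_k(Y)$. Expanding $\Phi_k(X^*X)$ by linearity, the $D^*D$ terms cancel because $a+b=1$. The cross terms $-b(D^*\Phi_k(Y)+\Phi_k(Y)^*D)$ appear with the same coefficient on both sides and also cancel. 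It therefore suffices to prove, for all $Y$ with $\Delta_k(Y)=0$,
\begin{equation*}
a\Delta_k(Y^*Y)+b\Phi_k(Y^*Y)-b^2\Phi_k(Y)^*\Phi_k(Y)\ \ge\ 0 .
\end{equation*}

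\emph{Step 2 ($0\le b$).} Here $b\Phi_k(Y^*Y)\ge0$ by $k$-positivity of $\Phi$, so it is enough to show $a\Delta_k(Y^*Y)\ge b^2Q$, where $Q:=\Phi_k(Y)^*\Phi_k(Y)\ge0$. By (\ref{Phi-k}), $\Delta_k(Y^*Y)\ge\Delta_k(Q)$. Proposition~\ref{L1} applied to $R_{1/k}$ says $Z\mapsto d\Delta_k(Z)-kZ$ is positive on $M_k(B(\ch_d))_+$, i.e.\ $kd\,\Delta_k(Z)\ge Z$ for $Z\ge0$. Hence $a\Delta_k(Q)\ge b^2Q$ as soon as $a\ge kd\,b^2$. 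This is exactly the lower bound in (\ref{KK1}).

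\emph{Step 3 ($b<0$, i.e.\ $a>1$): the main obstacle.} Now the term $b\Phi_k(Y^*Y)=-|b|P$, with $P:=\Phi_k(Y^*Y)\ge0$, has the wrong sign and must be absorbed into $a\Delta_k(Y^*Y)$. The tools are trace preservation, $\Delta_k(P)=\Delta_k(Y^*Y)$, and (\ref{Phi-k}). As in the proof of Theorem~\ref{T1}, I would split $a\Delta_k(Y^*Y)=a\,\Delta_k\big(tP+(1-t)Y^*Y\big)$ and then replace $Y^*Y$ by $Q$ inside $\Delta_k$, aiming at $c\,\Delta_k(Z)\ge Z$ for the positive element $Z=|b|P+b^2Q$ with $c\ge kd$.

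The naive estimate $\Delta_k(Z)\le(|b|+b^2)\Delta_k(Y^*Y)$ only yields the condition $a\ge kd(|b|+b^2)$. Reaching the sharper bound $a\ge kd\,b^2$ (the upper bound in (\ref{KK1})) requires a better way of absorbing the $-|b|P$ term. First I would try to use $a=1+|b|$ and dispose of $|b|\Delta_k(Y^*Y)-|b|P$ separately. I have not verified how to do this, so it is the point where the argument may need adjustment.
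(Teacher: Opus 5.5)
Your Steps 1 and 2 are correct, and Step 2 is essentially the paper's argument. The paper asserts the reduction to $\Delta_k(X)=0$, which you actually verify. It then bounds $a\Delta_k(Y^*Y)\ge a\Delta_k(Q)\ge\frac{a}{kd}Q$ via \eqref{Phi-k} and Proposition~\ref{L1}, and requires $\frac{a}{kd}\ge(1-a)^2$. There is a small slip: Proposition~\ref{L1} for $R_{1/k}$ gives positivity of $Z\mapsto d\Delta_k(Z)-\frac1k Z$, not of $d\Delta_k(Z)-kZ$; your conclusion $kd\,\Delta_k(Z)\ge Z$ is the right one. Note also that the paper silently drops $(1-a)\Phi_k(X^*X)$ as if it were nonnegative, so its proof, like yours, only covers $a\le 1$.

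Your Step 3 cannot be completed, because for $a>1$ the statement is false. Take $\Phi=\mathrm{id}$, which is completely positive, unital, trace preserving and satisfies \eqref{Phi-k} with equality, and take $X=E_{12}\in M_d$. Then $\Delta(X)=0$ and
\[
\Lambda^+_a(X^*X)-\Lambda^+_a(X)^*\Lambda^+_a(X)=\tfrac{a}{d}\id_d+\big((1-a)-(1-a)^2\big)E_{22}=a\Big(\tfrac1d\id_d-(a-1)E_{22}\Big),
\]
which is not positive for $a>1+\frac1d$. Since $k$-KS implies KS (restrict to the $(1,1)$ block of $M_k(B(\ch_d))$), $\Lambda^+_a$ is not $k$-KS for such $a$. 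Yet $1+\frac{\sqrt{4kd+1}+1}{2kd}>1+\frac1d$ for every $k\le d$, since this is equivalent to $d>k-1$. The cleanest instance is $k=1$, $d=2$, $a=2$: there $\Lambda^+_2(X)=\tr(X)\id_2-X$ and $\Lambda^+_2(E_{22})=E_{11}\not\geq E_{22}=\Lambda^+_2(E_{12})^*\Lambda^+_2(E_{12})$. The paper's own Remark~\ref{C2}, with upper bound $1+\frac1{d-1}$ for $\Phi=T$, $k=1$, is likewise incompatible with \eqref{KK1} once $d\ge 3$. So in the regime $b<0$ the target $a\ge kd\,b^2$ is not just hard to reach; it is wrong.

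What is true is exactly what your ``naive'' estimate yields. For $b<0$ and $Z=|b|P+b^2Q\ge 0$, trace preservation and \eqref{Phi-k} give $\Delta_k(Z)\le(|b|+b^2)\Delta_k(Y^*Y)=|b|\,a\,\Delta_k(Y^*Y)$. Hence $a\Delta_k(Y^*Y)-Z\ge a\Delta_k(Y^*Y)-kd\,\Delta_k(Z)\ge a(1-kd|b|)\Delta_k(Y^*Y)\ge 0$ as long as $a\le 1+\frac{1}{kd}$. Together with your Step 2, this proves that $\Lambda^+_a$ is $k$-KS for $1-\frac{\sqrt{4kd+1}-1}{2kd}\le a\le 1+\frac{1}{kd}$. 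For $k=1$ the example above shows that the upper bound $1+\frac1d$ is sharp, so it cannot be improved in general.
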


\begin{proof} Assume that \eqref{KK1} holds. To prove that $\Lambda^+_{a}$ is a $k$-KS operator, it is enough to establish that $\Lambda^+_{a;k}$ is KS-operator. It is enough to consider the case $\Delta_k(X)=0$. One finds
\small
\begin{eqnarray*}
&& \Lambda^+_{a;k}(X^*X) - \Lambda^+_{a;k}(X)^* \Lambda^+_{a;k}(X)=\frac{a}{d}\Delta_k(X^*X)+(1-a)\Phi_{k}(X^*X)-(1-a)^2\Phi_{k}(X)^*\Phi_{k}(X)\\[2mm]
&&={a}\bigg(\Delta_k(X^*X)-\frac{1}{dk}\Phi_{k}(X)^*\Phi_{k}(X)\bigg)+(1-a)\Phi_{k}(X^*X)+\bigg(\frac{a}{kd}-(1-a)^2\bigg)\Phi_{k}(X)^*\Phi_{k}(X)\\[2mm]
&&\geq {a}\bigg(\Delta_k(\Phi_{k}(X)^*\Phi_{k}(X))-\frac{1}{dk}\Phi_{k}(X)^*\Phi_{k}(X)\bigg)+(1-a)\Phi_{k}(X^*X)+\bigg(\frac{a}{kd}-(1-a)^2\bigg)\Phi_{k}(X)^*\Phi_{k}(X)\\[2mm]
\end{eqnarray*}
\normalsize
By Proposition \ref{L1} we have
$$
\Delta_k(\Phi_{k}(X)^*\Phi_{k}(X))-\frac{1}{dk}\Phi_{k}(X)^*\Phi_{k}(X)\geq 0.
$$
Therefore,
$$
\Lambda^+_{a;k}(X^*X) - \Lambda^+_{a;k}(X)^* \Lambda^+_{a;k}(X)\geq 0
$$
if
$$
\frac{a}{kd}-(1-a)^2\geq 0.
$$
Solving the last one, we arrive at \eqref{KK1}.
This completes the proof.
\end{proof}


%
%
%

\begin{rem}\label{C2} If $\Phi$ is a transposition map it was proved by Tomiyama \cite{Tomiyama1985} that $\Lambda^+_a$ is KS iff

$$
1 + \frac{1}{d-1} \geq a\geq 1 - \frac{\sqrt{4d+1}-1}{2d} ,
$$
and it is 2-KS iff it is 2-positive.
\end{rem}

\section{KS-decomposability}

In this section we study KS-decomposability of the
reduction and depolarizing mappings. Recall that a unital positive mapping $\Phi:B(\ch_d)\to
B(\ch_d)$ is called \textit{KS-decomposable} if there exist a
unital KS and co-KS operators $\Phi_1$ and $\Phi_2$, respectively,
such that
$$
\Phi=\lambda\Phi_1+(1-\lambda)\Phi_2
$$
for some $\lambda\in[0,1]$.

\begin{thm}\label{KS-d} Let $\Phi:B(\ch_d)\to B(\ch_d)$ be a KS-decomposable mapping,
i.e. there exist a unital KS and co-KS operators $\Phi_1$ and
$\Phi_2$, respectively, such that
\begin{equation}\label{KS-d0}
 \Phi=\lambda\Phi_1+(1-\lambda)\Phi_2
\end{equation}
for some $\lambda\in[0,1]$. Then

\begin{equation}  \label{KS-d1}
    \Phi(x^* \circ x) - \Phi(x)^*\circ \Phi(x)  \geq \lambda(1-\lambda) (\Phi_1(x)-\Phi_2(x))^* \circ (\Phi_1(x)-\Phi_2(x)) ,
\end{equation}
where we introduced the Jordan product $A \circ B := AB + BA$.

\end{thm}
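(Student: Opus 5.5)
The plan is to expand both sides of \eqref{KS-d1} using the decomposition \eqref{KS-d0}, and then reduce the inequality to the sum of the KS inequality for $\Phi_1$ and the co-KS inequality for $\Phi_2$, plus a leftover that is handled by an algebraic identity. Write $A=\Phi_1(x)$ and $B=\Phi_2(x)$, so that $\Phi(x)=\lambda A+(1-\lambda)B$.

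First, the left-hand side. Since $x^*\circ x = x^*x+xx^*$, linearity gives
\[
\Phi(x^*\circ x)=\lambda\big(\Phi_1(x^*x)+\Phi_1(xx^*)\big)+(1-\lambda)\big(\Phi_2(x^*x)+\Phi_2(xx^*)\big).
\]
Each of these four terms is bounded below. The KS property of $\Phi_1$, applied to $x$ and to $x^*$, gives $\Phi_1(x^*x)\ge A^*A$ and $\Phi_1(xx^*)\ge AA^*$; here I use $\Phi_1(x^*)=\Phi_1(x)^*$, which holds because a unital KS map is positive and hence Hermitian-preserving. In the same way, the co-KS property of $\Phi_2$, applied to $x$ and to $x^*$, gives $\Phi_2(x^*x)\ge BB^*$ and $\Phi_2(xx^*)\ge B^*B$. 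Summing,
\[
\Phi(x^*\circ x)\ge \lambda\, A^*\circ A+(1-\lambda)\, B^*\circ B .
\]
This is the only step that uses the analytic hypotheses. It is also the main point to check: the co-KS condition of $\Phi_2$ gives the products in the reversed order, but because the Jordan product symmetrizes, $B^*\circ B$ is recovered all the same.

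Second, subtract $\Phi(x)^*\circ\Phi(x)$ and use the standard convexity identity, which holds for any bilinear symmetric-type form $Q(A,B)=A^*\circ B$:
\[
\lambda Q(A,A)+(1-\lambda)Q(B,B)-Q\big(\lambda A+(1-\lambda)B,\lambda A+(1-\lambda)B\big)=\lambda(1-\lambda)\,Q(A-B,A-B).
\]
Verifying this is routine. Expanding, the cross terms are $\lambda(1-\lambda)(A^*\circ B+B^*\circ A)$, and the coefficients of the squared terms are $\lambda-\lambda^2$ and $(1-\lambda)-(1-\lambda)^2$. The noncommutativity causes no trouble because the Jordan product is bilinear, so the expansion is exact.

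Combining the two steps gives \eqref{KS-d1} with $\Phi(x)^*\circ\Phi(x)$ subtracted on the left. I expect no real obstacle; the only delicate points are the Hermitian-preservation remark and keeping the order of the factors straight in the co-KS terms.
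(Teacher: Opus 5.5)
Your proposal is correct and follows the same route as the paper. You apply the KS inequality for $\Phi_1$ and the co-KS inequality for $\Phi_2$ at both $x$ and $x^*$, then expand $\Phi(x)^*\Phi(x)+\Phi(x)\Phi(x)^*$ and collect terms into $\lambda(1-\lambda)(\Phi_1(x)-\Phi_2(x))^*\circ(\Phi_1(x)-\Phi_2(x))$, and your explicit check that $\Phi_i(x^*)=\Phi_i(x)^*$ (via positivity of KS and co-KS maps) and your convexity identity for the real-bilinear form $(A,B)\mapsto A^*\circ B$ fill in steps the paper leaves implicit.
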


\begin{proof} Since $\Phi_1$ is KS-operator, and $\Phi_2$ is co-KS, from \eqref{KS-d0} we obtain
\begin{eqnarray}\label{KS-d2}
\Phi(x^*x)=\lambda\Phi_1(x^*x)+(1-\lambda)\Phi_2(x^*x)\geq
\lambda\Phi_1(x)^*\Phi_1(x)+(1-\lambda)\Phi_2(x)\Phi_2(x)^*.
\end{eqnarray}
Similarly,
\begin{eqnarray}\label{KS-d3}
\Phi(xx^*)\geq\lambda \Phi_1(x)\Phi_1(x)^*+(1-\lambda)\Phi_2(x)^*\Phi_2(x).
\end{eqnarray}
One can see that
\begin{eqnarray}\label{KS-d4}
\Phi(x)^*\Phi(x)&=&\lambda^2\Phi_1(x)^*\Phi_1(x)+\lambda(1-\lambda)\Phi_1(x)^*\Phi_2(x)\nonumber\\
&&+\lambda(1-\lambda)\Phi_2(x)^*\Phi_1(x)+(1-\lambda)^2\Phi_2(x)^*\Phi_2(x),\\[2mm]\label{KS-d5}
\Phi(x)\Phi(x)^*&=&\lambda^2\Phi_1(x)\Phi_1(x)^*+\lambda(1-\lambda)\Phi_1(x)\Phi_2(x)^*\nonumber\\
&&+\lambda(1-\lambda)\Phi_2(x)\Phi_1(x)^*+(1-\lambda)^2\Phi_2(x)\Phi_2(x)^*.
\end{eqnarray}
Hence, combining \eqref{KS-d2}-\eqref{KS-d5}, we obtain
\begin{eqnarray*}
&&\Phi(x^*x)+\Phi(xx^*)-\Phi(x)^*\Phi(x)-\Phi(x)\Phi(x)^*\nonumber\\[2mm]
&\geq&\lambda(1-\lambda)\bigg(\Phi_1(x)^*\Phi_1(x)+\Phi_2(x)^*\Phi_1(x)-\Phi_2(x)^*\Phi_1(x)-\Phi_1(x)^*\Phi_2(x)\bigg)\\[2mm]
&&+\lambda(1-\lambda)\bigg(\Phi_1(x)^*\Phi_1(x)+\Phi_2(x)^*\Phi_1(x)-\Phi_2(x)^*\Phi_1(x)-\Phi_1(x)^*\Phi_2(x)\bigg)\\[2mm]
&=&\lambda(1-\lambda)\bigg((\Phi_1(x)-\Phi_2(x))^*(\Phi_1(x)-\Phi_2(x))+(\Phi_1(x)-\Phi_2(x))(\Phi_1(x)-\Phi_2(x))^*\bigg),
\end{eqnarray*}
which completes the proof.
\end{proof}

\begin{rem} We point out that in \cite{St}, for any positive mapping $\Phi:B(\ch_d)\to B(\ch_d)$ with $\Phi(\id)\leq\id$, it was proved a similar
kind of inequality, i.e.

\[  \Phi(X^* \circ X) \geq \Phi(X)^* \circ \Phi(X) .\]
%
However, if one requires the KS-decomposability of $\Phi$, then we
get a more stronger inequality, i.e. we have a more accurate lower
bound for the above given expression.

\end{rem}

\begin{thm}\label{dKS1} If $\Phi=T$ the mapping $\Psi_a:=\Lambda^+_a$  is KS-decomposable for $a\in[0,1]$.
\end{thm}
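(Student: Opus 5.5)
The natural decomposition is the one already written into the definition: $\Psi_a = a\Delta + (1-a)T$ with $\lambda = a\in[0,1]$, $\Phi_1=\Delta$ and $\Phi_2 = T$. So I will check that $\Delta$ is a unital KS operator and that $T$ is a unital co-KS operator. Both maps are unital, since $\Delta(\id_d)=\id_d$ and $T(\id_d)=\id_d$. Convexity of the coefficients is automatic for $a\in[0,1]$, and $\Psi_a$ is positive as a convex combination of positive maps. The endpoint cases $a=0$ and $a=1$ are covered by taking $\lambda=0$ or $\lambda=1$.

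\emph{KS property of $\Delta$.} The map $\Delta$ is completely positive and unital, hence $2$-positive, and so it satisfies the KS inequality, as recalled after the definition in Section~2. One can also check this directly. For $X\in B(\ch_d)$,
\[
\Delta(X^*X)-\Delta(X)^*\Delta(X)=\frac1d\Big(\tr(X^*X)-\frac1d|\tr X|^2\Big)\id_d ,
\]
and this is $\ge 0$ by the Cauchy--Schwarz inequality $|\tr X|^2=|(\id_d,X)_{HS}|^2\le d\,\tr(X^*X)$.

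\emph{co-KS property of $T$.} Here we need $T(X^*X)\ge T(X)T(X)^*$. We have $T(X^*X)=(X^*X)^T = X^T (X^*)^T = X^T\,\overline{X}$. We also have $T(X)=X^T$, so $T(X)^*=\overline{X}$, and therefore $T(X)T(X)^* = X^T\overline{X}$. Thus equality holds, and $T$ is co-KS. It is not KS, which is why it must sit in the co-KS slot.

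Combining the two checks, $\Psi_a=\lambda\Phi_1+(1-\lambda)\Phi_2$ with $\lambda=a$, $\Phi_1=\Delta\in\mathcal{KS}$ and $\Phi_2=T$ co-KS. This is exactly KS-decomposability. No step is a genuine obstacle. The only point needing care is the orientation of the products in the transpose computation, since $T$ is an anti-homomorphism. That orientation is what makes $T$ co-KS rather than KS, and it is where a sign or order slip would break the argument.
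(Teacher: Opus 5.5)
Your proof is correct, and it takes a shortcut compared with the paper's argument.

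The paper splits into cases. For $a\ge t_d:=1-\frac{\sqrt{4d+1}-1}{2d}$ it uses Theorem~\ref{KK} to see that $\Psi_a$ itself is KS, so $\lambda=1$. For $a<t_d$ it writes $\Psi_a=\lambda\Psi_\beta+(1-\lambda)T$ with $\beta=a/\lambda$. It then chooses $\lambda\le a/t_d$, so that $\beta\ge t_d$ and Theorem~\ref{KK} makes $\Psi_\beta$ KS. The co-KS property of $T$ is attributed to co-complete positivity.

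Your decomposition is the member $\lambda=a$, $\beta=1$ of that family, where $\Psi_1=\Delta$ is unital CP. So you need no case split and no appeal to Theorem~\ref{KK}. You also verify co-KS of $T$ directly: from the anti-homomorphism property it holds with equality.

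Your choice also avoids a loose end in the paper's version. The paper bounds $\lambda$ only from above, so $\beta=a/\lambda$ can exceed $1$. There the proof of Theorem~\ref{KK} no longer works, because the discarded term $(1-\beta)\Phi_k(X^*X)$ becomes negative. Once $\beta>d/(d-1)$, $\Psi_\beta$ is not even positive. The constraint $\lambda\ge a$ has to be added, and $\lambda=a$ satisfies it.

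What the paper's route offers, once repaired, is a whole interval $\lambda\in[a,a/t_d]$ of KS-decompositions that put more weight on the KS part. Note, however, that $\Phi_1-\Phi_2=\beta(\Delta-T)$. Hence the coefficient $\lambda(1-\lambda)\beta^2=a^2(1-\lambda)/\lambda$ on the right-hand side of Theorem~\ref{KS-d} is largest on that interval at your choice $\lambda=a$.
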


\begin{proof} If $a\geq\frac{2d+1-\sqrt{4d+1}}{2d}$, then due to
Theorem \ref{KK}, the mapping $\Psi_a$ is a KS-operator, hence it
is KS-decomposable (since $\lambda=1$ and $\Phi_1=\Psi_a$). Therefore,
we assume that
\begin{equation}\label{dKS2}
a<\frac{2d+1-\sqrt{4d+1}}{2d}.
\end{equation}
Now, we suppose that $\Psi_a$ has the following decomposition:

\begin{equation}\label{dKS3}
\Psi_a=\lambda \Psi_\beta +(1-\lambda)T_d ,
\end{equation}
where $T_d$ stands for transposition in $M_d$, and
$$
\Psi_\beta(X)=\frac{\beta}{d}\tr(X)\id_d +(1-\b)X^T.
$$
If we able to find $\lambda$ and $\beta$ so that $\Psi_\beta$ is a
KS-operator, then the required assertion would be proved, since
$T_d$ is co-completely positive.

From \eqref{dKS3}, we find $\lambda\beta=a$ which yields $\beta=\frac{a}{\lambda}$.
Then, we choose $\lambda\in[0,1]$ so that
$$
a\geq \lambda \, \frac{2d+1-\sqrt{4d+1}}{2d}\ .
$$
This implies that
$$
\beta \geq\frac{2d+1-\sqrt{4d+1}}{2d},
$$
which again according to Theorem \ref{KK} yields that $\Psi_\beta$ is
a KS-operator. This completes the proof.
\end{proof}

Now, we turn our attention to the mapping $R_a$ defined by
\eqref{F0}. Namely,
\begin{equation}\label{df1}
R_a(X)=\frac{1}{d-a}(\tr(X)\id_d -aX), \ \ a\in[0,1]
\end{equation}
which is a KS-operator (see Corollary \ref{C1}) iff $a\leq
\frac{d}{d+1}$.

\begin{thm}\label{1dKS}  The mapping $R_a$  is KS-decomposable.
\end{thm}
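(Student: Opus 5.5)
The plan mirrors the proof of Theorem \ref{dKS1}, with the roles of $\Delta$ and $\Phi$ adapted to the reduction map.

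\emph{Step 1: the KS range.} If $a\le \frac{d}{d+1}$, then by Theorem \ref{T1} with $\Phi=\mathrm{id}$ and $k=1$ (see also Remark \ref{C1}), $R_a$ is a KS-operator. It is then trivially KS-decomposable with $\lambda=1$ and $\Phi_1=R_a$. So it remains to treat $\frac{d}{d+1}<a\le 1$.

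\emph{Step 2: the decomposition ansatz.} Write
\[
R_a=\lambda R_\beta+(1-\lambda)\Phi_2 ,
\]
where $R_\beta$ is KS, i.e.\ $\beta\le \frac{d}{d+1}$, and $\Phi_2$ is a unital co-KS map. The natural candidate for $\Phi_2$ is the depolarizing channel $\Delta$. It is completely positive and unital, so it is KS. It is also co-KS: since $\Delta(X)$ is a multiple of the identity, $\Delta(X)\Delta(X)^*=\Delta(X)^*\Delta(X)\le\Delta(X^*X)$.

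With this choice the decomposition reads
\[
R_a=\lambda R_\beta+(1-\lambda)\Delta .
\]
Then both summands are KS, and by convexity (Theorem \ref{ks-s}(i)) $R_a$ itself would be KS. This contradicts the iff statement of Remark \ref{C1} in the range $a>\frac{d}{d+1}$. So $\Delta$ cannot serve as the co-KS part there. A genuinely co-KS, non-KS component is needed, and the natural one is the transposition $T$, which is co-KS because it is co-completely positive.

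\emph{Step 3: trying a transposition component.} The obvious ansatz is
\[
R_a=\lambda R_\beta+(1-\lambda)\Psi ,
\]
where $\Psi$ is a co-KS map built from $T$, for example a depolarized transposition $\Psi=c\Delta+(1-c)T$ with $c\in[0,1]$. The difficulty is that $R_a$ has no $T$-component: its coefficient on $X^T$ is zero. Hence an equality of maps forces the $T$-part of $\Psi$ to vanish. The same obstruction applies to any ansatz in which the co-KS part contains $T$ while the other summands do not.

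\emph{Step 4: the main obstacle.} The key difficulty is therefore to find a genuinely co-KS, non-KS component that is compatible with the form $\tr(X)\id_d-aX$. The first thing I would try is a co-KS map of the form $X\mapsto U\,\Psi(VXV^*)\,U^*$ with $\Psi$ as in Step 3. The hope is that unitary conjugation, which preserves the co-KS property by the analogue of Theorem \ref{ks-s}(ii), could absorb the mismatch. I am not confident this works.

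If no such component exists, the statement in the range $\frac{d}{d+1}<a\le1$ would have to rest on a different splitting. In that case I would re-examine whether the author intends $a$ restricted so that Step 1 already covers the claim.
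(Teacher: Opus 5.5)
Your Steps 1 and 2 are sound. In particular, $\Delta$ is both KS and co-KS, so it cannot be the co-KS part once $a>\frac{d}{d+1}$. The gap is that the proposal never produces a decomposition for $\frac{d}{d+1}<a\le 1$, which is exactly where the theorem has content. Since \eqref{df1} takes $a\in[0,1]$, this range cannot be argued away.

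The obstruction in Step 3 is not real. It presumes that a co-KS, non-KS component must carry an explicit $X^T$ term, whereas reduction-type maps are themselves completely copositive. The map $X\mapsto\tr(X)\id_d-tX^T$ has Choi matrix $\id_d\otimes\id_d-tF$, where $F$ is the flip. Its eigenvalues are $1-t$ on the symmetric subspace and $1+t$ on the antisymmetric subspace, so $X\mapsto\tr(X)\id_d-tX$ is co-CP for $|t|\le 1$. A unital co-CP map $\Phi=\Psi\circ T$ with $\Psi$ unital CP is co-KS. Indeed, $(X^*X)^T=\bar X^*\bar X$ and $\Psi(\bar X)=\Phi(X)^*$, so $\Phi(X^*X)=\Psi(\bar X^*\bar X)\ge\Psi(\bar X)^*\Psi(\bar X)=\Phi(X)\Phi(X)^*$.

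This is the missing idea, and it is what the paper uses. It writes $R_a=\lambda\Phi_1+(1-\lambda)\Phi_2$, with $\alpha,\lambda$ chosen to meet the coefficient-matching and unitality constraints, where:
\begin{itemize}
\item $\Phi_1(X)=\frac{\alpha}{\lambda}\bigl(\tr(X)\id_d-\frac{\beta}{\alpha}X\bigr)$ with $\frac{\beta}{\alpha}\le\frac{d}{d+1}$, which is KS by Remark~\ref{C1};
\item $\Phi_2(X)=\frac{\gamma}{1-\lambda}\bigl(\tr(X)\id_d-\frac{\tilde\delta}{\gamma}X\bigr)$ with $0<\frac{\tilde\delta}{\gamma}\le 1$, which is co-CP, hence co-KS.
\end{itemize}

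The same fact actually gives a shorter route. $R_a$ itself is co-CP for every $a\in[0,1]$: after composing with $T$, its Choi matrix is $\frac{1}{d-a}(\id_d\otimes\id_d-aF)\ge 0$. So $\lambda=0$ already works, including at $a=1$.

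Your Step 4 guess, a unitary conjugate of $c\Delta+(1-c)T$, can only help when $d=2$, where $\tr(X)\id_2-X=\sigma_yX^T\sigma_y$. For $d\ge3$, no map $X\mapsto WX^TW^*$ lies in the span of $X\mapsto X$ and $X\mapsto\tr(X)\id_d$; compare the images of $E_{12}$ and $E_{13}$. Indeed, $X\mapsto\tr(X)\id_d-X^T$ requires the $d(d-1)/2$ Kraus operators $E_{ij}-E_{ji}$, $i<j$.
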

For the proof cf. Appendix.

%

\section*{Conclusions}

We analyzed $k$-Kadison–Schwarz maps and KS-decomposable positive maps on matrix algebras. Using the construction of the projection $\Delta_k$ we derived explicit criteria guaranteeing the $k$-KS property and clarified the relation between $k$-positivity and $k$-KS condition.
Within this framework we studied two classes of maps $\Lambda^\pm_a$ depending on a particular $k$-positive map $\Phi$. We derive conditions which guarantee that $\Lambda^\pm_a$ are $k$-KS for arbitrary $\Phi$. In partcular cases when $\Phi$ is an identity map or a transposition map our results perfectly agree with those of Tomiyama \cite{Tomiyama1985}.

We also introduced KS-decomposability, showing that certain maps admit a convex decomposition into KS and co-KS components. The resulting inequalities strengthen classical results such as Stormer’s inequality and provide new insight into the structure of positive maps relevant for entanglement detection and quantum channels.

Several directions for further research are suggested by our results. One natural problem is to characterize KS--decomposability beyond the specific families studied here, for instance for extremal positive maps or maps arising from more general quantum channels. Another interesting avenue is to explore systematically the entanglement-detecting power of $k$--KS and KS--decomposable maps, in particular the strength of the associated families of entanglement witnesses compared with those coming from arbitrary positive maps. More generally, the interplay between $k$-positivity, $k$--KS properties and KS--decomposability may provide additional insight into the fine structure of positive cones in matrix algebras and their applications in quantum information theory \cite{Alex}.

Another interesting problem is to analyze our construction where the map $\Phi$ enjoys additional symmetry. A map $\Phi$ is covariant w.r.t. the unitray representation of the group $G$ on the Hilbert space $\ch_d$
if $U_g \Phi(X) U_g^* = \Phi(U_g X U_g^*)$ for all elements $X$. Similarly, $\Phi$ is conjugate covariant if
$U_g \Phi(X) U_g^* = \Phi(\overline{U}_g X U_g^T)$. Note, that maps considered by Tomiyama \cite{Tomiyama1985} enjoy covariance properties: identity map is covariant w.r.t. the full unitary group $U(d)$, and similarly transposition map is conjugate covariant. Studies of covariant Kadison-Schwarz maps were provided recently in \cite{CB24,Ibort}. It would be interesting to provide details analysis of the construction proposed in this paper assuming that the map $\Phi$ is covariant w.r.t. subgroup of diagonal unitary matrices. Such analysis for completely positive maps was recently provided in \cite{Nechita-1,Nechita-2}.

\appendix

\section{Proof of Theorem \ref{HS}}

Let $\ell^2_k$ be the $k$–dimensional Hilbert space with
orthonormal basis $\{e_1,\dots,e_k\}$ and set
\[
\mathcal L:= \ell^2_k \otimes \mathcal H_{HS}.
\]
Fix $X=(x_{ij})_{i,j=1}^k \in M_k(B(\mathcal H_d))$.
For each $j=1,\dots,k$, define a vector $\xi_j \in \mathcal L$ by
\[
\xi_j := \sum_{m=1}^k e_m \otimes x_{mj}.
\]
A direct computation shows
\[
\langle \xi_i,\xi_j\rangle_{\mathcal L}
=
\sum_{m=1}^k \operatorname{Tr}(x_{mi}^* x_{mj})
=
\big(\operatorname{Tr}_2 X^*X\big)_{ij}.
\]
Thus the matrix $\operatorname{Tr}_2 X^*X \in M_k$ is the Gram matrix of the family $\{\xi_j\}_{j=1}^k$.

By assumption, $\Phi$ is a HS-contraction:
\[
\|\Phi(Z)\|_{HS} \le \|Z\|_{HS}
\quad \forall Z \in \mathcal H_{HS}.
\]
Define
\[
S := I_{\ell^2_k} \otimes \Phi
\]
on $\mathcal L$.
Then $S$ is also a contraction:
\[
\|S\eta\|_{\mathcal L} \leq \|\eta\|_{\mathcal L}
\quad \forall \eta \in \mathcal L.
\]
Now
\[
S\xi_j
=
\sum_{m=1}^k e_m \otimes \Phi(x_{mj}),
\]
and therefore
\[
\langle S\xi_i,S\xi_j\rangle_{\mathcal L}
=
\sum_{m=1}^k
\operatorname{Tr}\big(\Phi(x_{mi})^*\Phi(x_{mj})\big)
=
\operatorname{Tr}\big((\Phi_{k}(X)^*\Phi_{k}(X))_{ij}\big).
\]
Hence the scalar matrix
\[
\Big(
\operatorname{Tr}((X^*X)_{ij})
-
\operatorname{Tr}((\Phi_{k}(X)^*\Phi_{k}(X))_{ij})
\Big)_{i,j}
\]
is the difference of the Gram matrices of
$\{\xi_j\}$ and $\{S\xi_j\}$.
Let $c=(c_1,\dots,c_k)\in\mathbb C^k$ and set
\[
\eta := \sum_{j=1}^k c_j \xi_j.
\]
Then
\begin{equation}\label{SF}
\sum_{i,j=1}^k
\overline{c_i} c_j
\Big(
\operatorname{Tr}((X^*X)_{ij})
-
\operatorname{Tr}((\Phi_{k}(X)^*\Phi_{k}(X))_{ij})
\Big)
=
\|\eta\|_{\mathcal L}^2
-
\|S\eta\|_{\mathcal L}^2.
\end{equation}
Since $S$ is a contraction,
\[
\|\eta\|_{\mathcal L}^2
-
\|S\eta\|_{\mathcal L}^2
\geq 0.
\]
Therefore the above scalar matrix is positive semidefinite.
By \eqref{SF} and the definition of $\Delta_k$, the last one is equivalent to
\[
\Delta_k(X^*X)
-
\Delta_k(\Phi_{k}(X)^*\Phi_{k}(X))
\geq 0
\]
in $M_k(B(\mathcal H_d))$.
This completes the proof.

\section{Proof of Theorem \ref{dKS1}}

If $a\leq\frac{d}{d+1}$, then $R_a$ is KS-decomposable. Therefore,
we assume that
\begin{equation}\label{2dKS}
a>\frac{d}{d+1}.
\end{equation}
Suppose that
\begin{equation}\label{21dKS}
R_a(X)=\a\tr(X)\id_d -\b X+\g\tr(X)\id_d +\d X
\end{equation}
This due to \eqref{df1} implies
\begin{equation}\label{4dKS}
\a+\g=\frac{1}{d-a}, \ \ \b-\d=\frac{a}{d-a}.
\end{equation}
Due to the unitality of $R_a$, we need to impose that
\begin{equation}\label{5dKS}
\a d -\b=\lambda, \ \ \g d+\d=1-\lambda,
\end{equation}
for some $\lambda\in[0,1]$.

From \eqref{21dKS}, we obtain the decomposition
$$
R_a=\lambda\Phi_1+(1-\lambda)\Phi_2,
$$
where
\begin{equation}\label{1dFF1}
\Phi_1(X)=\frac{1}{\lambda}(\a\tr(X) \id_d -\b X), \ \
\Phi_2(X)=\frac{1}{1-\lambda}(\g\tr(X) \id_d +\d X).
\end{equation}
According to \eqref{5dKS} the mappings $\Phi_1$,$\Phi_2$ are
unital. Now our aim to choose the parameters so that $\Phi_1$ is a
KS-operator and $\Phi_2$ is a co-KS.

We first notice that only three equations in
\eqref{4dKS},\eqref{5dKS} are linearly  independent. Therefore, we
assume that the parameter $A$ is free variable. Then, one gets
\begin{eqnarray}\label{7dKS}
\b=\a d-\lambda, \ \ \ \g=\frac{1}{d-a}-\a, \ \  \ \d=\a d-\lambda-\frac{a}{d-a}.
\end{eqnarray}

From \eqref{1dFF1}, one finds
\begin{equation}\label{6dKS}
\Phi_1(X)=\frac{\a}{\lambda}\bigg(\tr(X)\id_d -\frac{\b}{\a}X\bigg), \ \
\Phi_2(X)=\frac{\g}{1-\lambda}\bigg(\tr(X)\id_d +\frac{\d}{\g}X\bigg).
\end{equation}

Assume that $\a>0$, $\b>0$ and
$$
\frac{\b}{\a}\leq \frac{d}{d+1}
$$
which due to Corollary \ref{C1} implies $\Phi_1$ is KS. From
\eqref{7dKS} the last condition is equivalent to
\begin{equation*}
\frac{\a d^2}{d+1}\leq \lambda.
\end{equation*}
Due to $\b>0$ we have $\lambda\leq \a d$. Hence,
\begin{equation}\label{8dKS}
\frac{\a d^2}{d+1}\leq \lambda\leq \a d.
\end{equation}

Now we suppose that $\g>0$ which is equivalent to
\begin{equation}\label{0AC}
\a<\frac{1}{d-a}
\end{equation}
The condition \eqref{2dKS} implies that
$$
\frac{a(d+1)}{d(d-a)}\geq \frac{1}{d-a}>\a
$$
it yields
$$
\a d-\frac{a(d+1)}{d-a}<0.
$$
This implies
$$
\a d^2+\a d-\frac{a(d+1)}{d-a}<\a d^2 \ \ \Leftrightarrow
\a d-\frac{a}{d-a}<\frac{\a d^2}{d+1}
$$
From the last inequality due to \eqref{8dKS} we find
$$
\lambda>\a d-\frac{a}{d-a}
$$
which is equivalent to $\d<0$.
Hence,
$$
\Phi_2(X)=\frac{\g}{1-\lambda}\bigg(\tr(X)\id_d -\frac{\tilde \d}{\g}X\bigg).
$$
where $\tilde \d=-\d$.
We know that the mapping $\Phi_2$ is co-CP iff
$\frac{\tilde \d}{\g}\leq 1$. This condition is equivalent to
$$
\lambda\leq (d-1)\a+\frac{1-a}{d-a}.
$$
It is easy to verify that \eqref{0AC} implies
$$
(d-1)\a+\frac{1-a}{d-a}\leq 1.
$$
Thus,
\begin{equation}\label{1AC}
\frac{\a d^2}{d+1}\leq \lambda\leq \min\bigg\{\a d,
(d-1)\a+\frac{1-a}{d-a}\bigg\}.
\end{equation}

If $a<1$ and $\a\leq \frac{1-a}{d-a}$, then
\begin{equation*}\label{1AC}
\a d\leq (d-1)\a+\frac{1-a}{d-a} \end{equation*} hence, one finds
\begin{equation}\label{2AC}
\frac{\a d^2}{d+1}\leq \l\leq \a d.
\end{equation}
Since $\a d\neq \frac{\a d^2}{d+1}$, we may always find the needed
$\l$ belonging to $(0,1]$. Therefore, one can find (using
\eqref{7dKS}), the parameters for which the mapping $\Phi_1$ is KS
and $\Phi_2$ is co-KS.

Now, if $a=1$, then from \eqref{1AC} we again find \eqref{2AC}
which yields the assertion. This completes the proof.

\section*{Acknowledgments}
	F.M.  thanks to the UAEU UPAR Grant No. G00004962 for support. D.C. was supported by the Polish National Science
Centre project No. 2024/55/B/ST2/01781.


\section*{Conflicts of Interest} The authors declare that they have no conflicts of interest.

\section*{Author's contribution} All authors have equally contributed to the paper.

\end{document}